\newtheorem{theorem}{Theorem}
\newtheorem{proposition}[theorem]{Proposition}%
\theoremstyle{thmstyletwo}%
\newtheorem{lemma}{Lemma}%
\theoremstyle{thmstylethree}%
\newtheorem{definition}{Definition}%
\begin{document}

\title{Branching with a pre-specified finite list of $k$-sparse split sets for binary MILPs}


\author[1]{Santanu S. Dey \thanks{santanu.dey@isye.gatech.edu}}

\author[2]{Diego Mor\'an\thanks{morand@rpi.edu}}

\author[1]{Jingye Xu \thanks{jxu673@gatech.edu}}

\affil[1]{{School of Industrial and Systems Engineering}, {Georgia Institute of Technology}}

\affil[2]{{Department of Industrial and Systems Engineering}, {Rensselaer Polytechnic Institute}}

\maketitle

\abstract{When branching for binary mixed integer linear programs with disjunctions of sparsity level $2$, we observe that there exists a finite list of $2$-sparse disjunctions, such that any other $2$-sparse disjunction is dominated by one disjunction in this finite list. For sparsity level greater than $2$, we show that a finite list of disjunctions with this property cannot exist. This 
leads to the definition of covering number for a list of splits disjunctions. Given a finite list of split sets $\mathcal{F}$ of $k$-sparsity, and a given $k$-sparse split set $S$, let $\mathcal{F}(S)$ be the minimum number of split sets from the list $\mathcal{F}$, whose union contains $S \cap [0, \ 1]^n$. Let the covering number of $\mathcal{F}$ be the maximum value of $\mathcal{F}(S)$ over all $k$-sparse split sets $S$. We show that the covering number for any finite list of $k$-sparse split sets is at least $\lfloor k/2\rfloor $ for $k \geq 4$. We also show that the covering number of the family of $k$-sparse split sets with coefficients in $\{-1, 0, 1\}$ is upper bounded by $k-1$ for $k \leq 4$.}

\section{Introduction}\label{sec:intro}

Land and Doig~\cite{land1960automatic} invented the branch-and-bound procedure to solve  mixed integer linear programs (MILP). Today, all state-of-the-art MILP solvers use the branch-and-bound procedure at its core.  
An important decision in formalizing a branch-and-bound algorithm is to decide the method to partition the feasible region of the linear program corresponding to a node in the branch-and-bound tree.  Given $\pi \in \mathbb{Z}^n$ and $\eta \in \mathbb{Z}$, a general way to partition a feasible region where all variables are binary is to the use the following disjunction for $x \in \{0, 1\}^n$:
$\left(\pi^{\top}x \leq \eta \right) \vee \left(\pi^{\top}x \geq \eta + 1 \right),$
in order to create two child nodes. The open set $$S(\pi, \eta):= \left\{x \in \mathbb{R}^n \,|\, \eta < \pi^{\top}x  < \eta+ 1\right\},$$ is called \emph{split set} and the associated disjunction is called \emph{split disjunction}. We say a split set $S(\pi, \eta)$ is $k$-sparse if the number of non-zero entries of $\pi$, denoted by $\|\pi\|_0$, is at most $k$, that is, $\|\pi\|_0 \leq k.$

Most state-of-the-art MILP solvers are based on branch-and-bound trees built using $1$-sparse split disjunctions; such branch-and-bound trees are called simple branch-and-bound trees~\cite{dey2021lower}. One rationale for using $1$-sparse split disjunctions is to maintain the sparsity of linear programs solved at child nodes; see discussion in~\cite{dey2015approximating,dey2018analysis}. Recently,~\cite{dey2021branch,borst2020integrality} showed that on random instances, using $1$-sparse split disjunctions is sufficient to obtain a polynomial size branch-and-bound tree when the number of constraints are fixed.  However, several papers have shown the power of constructing branch-and-bound trees with dense disjunctions. See, for example, the papers~\cite{owen2001experimental,aardal2004hard,mahajan2009experiments,mehrotra2011branching,karamanov2011branching,cornuejols2011improved,mahmoud2013achieving,yang2021multivariable,munoz2023compressing} which present several evidences of dramatic reduction in the number of nodes in a branch-and-bound tree when using dense disjunctions in comparison to branch-and-bound trees based on $1$-sparse disjunctions. Moreover, the papers~\cite{jeroslow1974trivial,chvatal1980hard} present examples of MILPs where every $1$-sparse branching scheme leads to exponential size branch-and-bound trees, although these instances can be solved using polynomial-size branch-and-bound trees when using denser inequalities see~\cite{yang2021multivariable,basu2021complexity}. While the worst-case size of a branch-and-tree may be exponential even when using dense disjunctions~\cite{dadush2020complexity,dey2021lower,dey2022lower,glaser2024sub}, the papers~\cite{pataki2010basis,basu2021complexity} present other compelling theoretical evidence on the importance of branching using dense disjunctions.   

The papers~\cite{owen2001experimental,mahajan2009experiments,mehrotra2011branching,yang2021multivariable} show significant improvement in the size of the branch-and-bound tree by using split disjunctions of a specified sparsity level together which having the coefficients of the associated split sets being in $\{-1, 0, 1\}$. One perspective to view this line of work, is that they explore the paradigm of expanding the list of disjunctions used to build the branch-and-bound tree, from the typically used $1$-sparse disjunctions, to a finite list of pre-specified denser disjunctions. In this paper, we explore a geometric problem motivated by the use of such pre-specified finite lists of dense disjunctions to solve binary MILPs.  
\section{Main results}\label{sec:main}

\subsection{Dominance result for $2$-sparse disjunctions}
Consider two split sets $S(\pi^1, \eta^1)$ and $S(\pi^0, \eta^0)$ in $\mathbb{R}^n$. We say that $S(\pi^1, \eta^1)$ dominates $S(\pi^0, \eta^0)$ if
\begin{eqnarray}\label{eq:dom}
S(\pi^1, \eta^1) \cap [0, 1]^n \supseteq S(\pi^0, \eta^0) \cap [0, 1]^n.
\end{eqnarray}

If (\ref{eq:dom}) holds, then in any branch-and-bound tree that solves a binary MILP using the disjunction corresponding to $S(\pi^0, \eta^0)$, we may replace this disjunction by the disjunction corresponding to $S(\pi^1, \eta^1)$, resulting in a branch-and-bound tree that cannot increase in size in comparison to the original branch-and-bound tree.  

Let $\mathcal{F}_k$ be the finite list of $k$-sparse split sets, such that $S(\pi, \eta) \in \mathcal{F}_k$ if $\pi \in \{-1, 0, 1\}^n$, $\|\pi\|_0 \leq k$, and $\eta \in \{-k, \dots, -1, 0, 1, \dots,  k\}$. If we only use $1$-sparse disjunctions, then clearly there are only $n$ possible split sets in $\mathcal{F}_1$, none of which dominate each other. Next let us  consider the case of $2$-sparse disjunctions. 

\begin{proposition}\label{thm:2}
Consider any arbitrary $2$-sparse split set $S(\pi, \eta)\subseteq \mathbb{R}^n$, that is, $\pi \in \mathbb{Z}^n$ and $\|\pi\|_0 \leq 2$. Then there exists a split set in  $\mathcal{F}_2$ that dominates $S(\pi, \eta)$.
\end{proposition}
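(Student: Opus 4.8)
The plan is to reduce the question to a problem in two variables, normalize $\pi$ using the symmetries of the cube $[0,1]^n$, and then settle a short list of cases according to the size of $\eta$.

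Since $\|\pi\|_0 \le 2$, membership in $S(\pi,\eta)$ depends on at most two coordinates of $x$. If $\|\pi\|_0 \le 1$, say $\pi = a\,e_i$ with $a \in \mathbb{Z}$, then no point $x \in [0,1]^n$ with $x_i \in \{0,1\}$ can lie in $S(a e_i,\eta)$, because $a x_i \in \{0,a\} \subseteq \mathbb{Z}$ cannot satisfy the strict inequalities $\eta < a x_i < \eta + 1$; hence $S(a e_i,\eta) \cap [0,1]^n \subseteq \{x \in [0,1]^n : 0 < x_i < 1\} = S(e_i,0) \cap [0,1]^n$, and $S(e_i,0) \in \mathcal{F}_2$ (if $\pi = 0$ then $S(0,\eta) = \emptyset$). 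So assume $\|\pi\|_0 = 2$. Permuting coordinates is a symmetry of $[0,1]^n$ that preserves $\mathcal{F}_2$, so we may assume the support of $\pi$ is $\{1,2\}$ and write $\pi = (a,b)$ with $a,b$ nonzero integers. Replacing $(\pi,\eta)$ by $(-\pi,-\eta-1)$ leaves the split set unchanged, and swapping the two coordinates is again a symmetry of the cube; using these we may further assume $a = \max\{|a|,|b|\} \ge 1$. If $b < 0$, the substitution $x_2 \mapsto 1 - x_2$ — a bijection of $[0,1]^n$ — replaces $(a,b)$ by $(a,-b)$. Thus it suffices to treat $a \ge b \ge 1$, provided that at the end every dominating split set we produce is mapped back into $\mathcal{F}_2$ by the inverse of this normalization.

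So assume $a \ge b \ge 1$ and set $T := \{(x_1,x_2) \in [0,1]^2 : \eta < a x_1 + b x_2 < \eta + 1\}$; it is enough to dominate $T$ by a two-variable standard strip. The corner values of $a x_1 + b x_2$ on $[0,1]^2$ are $0,\,b,\,a,\,a+b$, and since $\eta$ is an integer the open strip $(\eta,\eta+1)$ contains none of them; hence $\mathbb{Z}$ is partitioned into the ranges $\eta \le -1$, $0 \le \eta \le b-1$, $b \le \eta \le a-1$ (vacuous when $a = b$), $a \le \eta \le a+b-1$, and $\eta \ge a+b$, and in the first and last of these $T = \emptyset$. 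For $0 \le \eta \le b-1$ I claim $T \subseteq \{0 < x_1 + x_2 < 1\}$: if $x \in T$ had $x_1 + x_2 \ge 1$ then $a x_1 + b x_2 \ge a(1-x_2) + b x_2 = a - (a-b)x_2 \ge b$, contradicting $a x_1 + b x_2 < \eta + 1 \le b$; and $a x_1 + b x_2 > \eta \ge 0$ forces $(x_1,x_2) \ne (0,0)$, that is, $x_1 + x_2 > 0$. For $b \le \eta \le a-1$ I claim $T \subseteq \{0 < x_1 < 1\}$: from $a x_1 \le a x_1 + b x_2 < \eta + 1 \le a$ we get $x_1 < 1$, and from $a x_1 = (a x_1 + b x_2) - b x_2 > \eta - b \ge 0$ we get $x_1 > 0$. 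Finally, the range $a \le \eta \le a+b-1$ reduces to the range $0 \le \eta \le b-1$ under the central symmetry $x \mapsto (1,1) - x$ of $[0,1]^2$ (which replaces $a x_1 + b x_2$ by $(a+b) - (a x_1 + b x_2)$), giving $T \subseteq \{1 < x_1 + x_2 < 2\}$. Through the coordinates $\{1,2\}$, the three strips $\{0 < x_1 + x_2 < 1\}$, $\{0 < x_1 < 1\}$, $\{1 < x_1 + x_2 < 2\}$ correspond to the members $S(e_1+e_2,0)$, $S(e_1,0)$, $S(e_1+e_2,1)$ of $\mathcal{F}_2$.

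It remains to undo the normalization — the coordinate permutation, the sign flip $(\pi,\eta) \mapsto (-\pi,-\eta-1)$ (which changes nothing), and, if it was used, the substitution $x_2 \mapsto 1-x_2$ — and to check that each of the three strips above is carried to a member of $\mathcal{F}_2$; the only thing to verify is that the constant $\eta^1$ stays within $\{-2,\dots,2\}$, which it does, since $x_2 \mapsto 1-x_2$ sends $S(e_1+e_2,0) \mapsto S(e_1-e_2,-1)$, $S(e_1+e_2,1) \mapsto S(e_1-e_2,0)$, and fixes $S(e_1,0)$. I do not expect a genuine obstacle; the one substantive point is getting the trichotomy right — in particular, recognizing that the ``middle'' strips, which separate two opposite facets of the cube, must be dominated by a $1$-sparse split set rather than by a $(\pm 1,\pm 1)$-split, so that a naive attempt using only two-variable $\{-1,0,1\}$-splits would fail. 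Everything else reduces to the one-line inequalities above.
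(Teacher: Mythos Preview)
Your proof is correct and follows essentially the same route as the paper: normalize to nonnegative coefficients via the cube symmetries, then split into three ranges for $\eta$ and exhibit the dominating $\{-1,0,1\}$-strip in each ($\{0<x_1+x_2<1\}$, a single-variable strip, and $\{1<x_1+x_2<2\}$). The only cosmetic differences are that you order the coefficients the other way, treat the third range by the central symmetry $x\mapsto(1,1)-x$ rather than by a direct inequality, and are more explicit about the $\|\pi\|_0\le 1$ case and about unwinding the normalization back into $\mathcal{F}_2$.
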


Proposition~\ref{thm:2} shows that if one decides to branch using $2$-sparse disjunctions only for solving binary MILPs, there is no reason to use general $2$-sparse split disjunctions -- in particular, one may restrict the use of disjunctions to the finite list described in Proposition~\ref{thm:2}. Indeed, the paper~\cite{yang2021multivariable} shows the importance of branching using $2$-sparse disjunctions by employing exactly the split sets described in Proposition~\ref{thm:2} and shows significant improvement over sizes of tree constructed using the $1$-sparse disjunctions. See Section~\ref{sec:proof2} for a proof of Proposition~\ref{thm:2}.

Generalizing the result of Proposition~\ref{thm:2}, we would like to fix the level of sparsity $k$ of the split disjunctions used to build branch-and-bound tree and ask the question: Does there exist a finite list of $k$-sparse disjunctions, such that it is sufficient to restrict attention to this finite list in order to get the full power of branching with $k$-sparse disjunctions. Unfortunately, as shown in the next result, such finite lists do not exist for $k$-sparse disjunctions with $k\geq 3$.

\begin{theorem}\label{thm:3}
Let $k \geq 3$. There does not exist any finite list $\mathcal{F}$ of $k$-sparse split sets such that any arbitrary $k$-sparse split set is dominated by exactly one of the split sets from $\mathcal{F}$.
\end{theorem}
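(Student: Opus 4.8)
The plan is to refute the existence of such an $\mathcal{F}$ by exhibiting an explicit \emph{infinite} family of $k$-sparse split sets, each of which is ``cube-maximal'' --- dominated by no $k$-sparse split set other than itself --- and which are pairwise distinct on $[0,1]^n$. If such a family exists, then any list $\mathcal{F}$ that dominates every $k$-sparse split set must contain, for each member $S$ of the family, a split set whose intersection with $[0,1]^n$ equals $S\cap[0,1]^n$; since the members are pairwise distinct on the cube, $\mathcal{F}$ is then forced to be infinite, a contradiction. (Cube-maximality makes the ``exactly one'' in the statement automatic, but only ``at least one'' is really needed.)

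The first reduction is that it suffices to build the family using three coordinates. If $S(\pi',\eta')$ dominates $S(\pi,\eta)$ with $\mathrm{supp}(\pi)\subseteq\{1,2,3\}$, then $\mathrm{supp}(\pi')\subseteq\{1,2,3\}$ as well: the set $S(\pi,\eta)\cap[0,1]^n$ leaves every coordinate outside $\{1,2,3\}$ free over $[0,1]$, so a nonzero coordinate of $\pi'$ outside $\{1,2,3\}$ contributes an unavoidable swing of size $\ge 1$ to $(\pi')^\top x$, which together with any swing coming from the coordinates in $\{1,2,3\}$ pushes $(\pi')^\top x$ out of every unit-length window --- unless the restriction of $\pi'$ to $\{1,2,3\}$ is a scalar multiple of $\pi|_{\{1,2,3\}}$, in which case one checks directly that $\pi'$ cannot dominate either. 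So one may work with dominance inside $\mathbb{R}^3$, where the workhorse is the LP reformulation of dominance: $S(\pi,\eta)$ is dominated by a split with normal $\pi'$ iff, setting $L=\min\{(\pi')^\top x: x\in[0,1]^n,\ \eta\le\pi^\top x\le\eta+1\}$ and $U=\max\{\cdots\}$, some integer $\eta'$ satisfies $\eta'\le L$ and $U\le\eta'+1$ (with the usual care about which extrema are attained only on the slab boundary, i.e.\ outside the open split set).

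For the construction I would take $S_c:=S\big((1,2,c,0,\dots,0),\,2\big)=\{x:\,2<x_1+2x_2+cx_3<3\}\cap\mathbb{R}^n$ for $c$ ranging over an infinite set of positive integers. The parity of $c$ is essential: for $c$ even the identity $\tfrac12(x_1+2x_2+cx_3)+\tfrac12 x_1=x_1+x_2+\tfrac{c}{2}x_3$ shows $S_c\cap[0,1]^n\subseteq\{1<x_1+x_2+\tfrac{c}{2}x_3<2\}$, so $S_c$ is (strictly) dominated, whereas for $c$ odd this and all similar ``rescaling'' reductions break down; so one proposes the odd integers $c\ge 3$ (the final admissible set may need trimming, but is in any case infinite). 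To prove $S_c$ is cube-maximal for each such $c$, by the reduction above one must show that for every $(a,b,d)\in\mathbb{Z}^3$ not proportional to $(1,2,c)$, the window $[L,U]$ of $a x_1+b x_2+d x_3$ over $\{x\in[0,1]^3:\,2\le x_1+2x_2+cx_3\le 3\}$ does not fit inside any open integer unit interval. First one cuts this to finitely many candidates: the trace of $S_c$ on the facet $\{x_3=0\}$ contains the segments $\{(x_1,1,0):x_1\in(0,1)\}$ and $\{(1,x_2,0):x_2\in(\tfrac12,1)\}$, forcing $|a|\le1$ and $|b|\le2$, and the $x_3$-fibers have length $1/c$, forcing $|d|\le c$; so for each $c$ only $O(c)$ normals remain. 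Then one evaluates the (parametric in $c$) LPs in closed form and verifies the window is either longer than $1$ or straddles an integer.

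The main obstacle is precisely this last verification carried out uniformly in the odd parameter $c$. The ``long window'' cases are easy; the delicate ones are the near-proportional normals $(a,b,d)$ whose window over $S_c$ has length close to, or exactly, $1$, where maximality must be rescued by the integer-placement part of the criterion (the window must contain an integer in its interior), and showing this for all odd $c$ is where divisibility properties of $c$ and the bulk of the casework enter. (Should the explicit family prove cumbersome, a fallback is to use ``thin, generically rotated'' slabs $S(\pi,\eta)$ with $\|\pi\|_\infty$ large and $\pi/\|\pi\|$ at positive angular distance from every integer direction of bounded norm, exploiting that every central hyperplane section of $[0,1]^n$ has inradius at least $\tfrac12$; but this route carries its own awkward ``half-integer placement'' subcases, and I expect the explicit construction to be the cleaner path.)
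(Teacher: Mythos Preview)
Your overall strategy --- exhibit an infinite family of $3$-sparse split sets and show that no single split can serve as dominator for more than one (or boundedly many) of them --- is the same as the paper's. The difference is in how much you demand and in the choice of family, and on both counts the paper's route is substantially lighter.

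You aim to prove that each $S_c$ is \emph{cube-maximal}: dominated by nothing except itself. The paper does not need this. It argues by pigeonhole: if $\mathcal{F}$ were finite, some single $S=S(\pi',\eta')\in\mathcal{F}$ would dominate $S_\gamma$ for infinitely many $\gamma$, and one derives a contradiction from that. Concretely, the paper takes
\[
S_\gamma=\{x\in\mathbb{R}^3:\ \gamma<x_1+\gamma x_2+(\gamma+1)x_3<\gamma+1\},
\]
whose closed slab has the three integer points $(0,1,0)$, $(1,1,0)$, $(0,0,1)$ on its boundary. Because integer points cannot lie in the open $S$, a short convexity argument forces any dominator to satisfy $\pi'^{\top}(0,1,0)=\eta'$, $\pi'^{\top}(1,1,0)=\eta'+1$, $\pi'^{\top}(0,0,1)=\eta'+1$, hence $\pi'=(1,\eta',\eta'+1)$. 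A single fractional boundary point $(1,0,\gamma/(\gamma+1))$ then gives $\eta'\geq\gamma$, which cannot hold for infinitely many $\gamma$. That is the whole proof: no exhaustive case analysis, no LP windows.

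Your family $S\big((1,2,c),2\big)$ for odd $c$ is less convenient because for $c\geq 4$ only the two integer points $(0,1,0)$ and $(1,1,0)$ lie on the slab boundary; the third coordinate of a would-be dominator is not pinned down by integer vertices, and you are pushed into the fractional-vertex casework you identify as the ``main obstacle.'' (Your $c=3$ is exactly the paper's $\gamma=2$.) Even if you insist on proving cube-maximality, switching to the paper's family would make it a short computation. Finally, a small cleanup: in your reduction to $\mathrm{supp}(\pi')\subseteq\{1,2,3\}$, the ``unless the restriction of $\pi'$ is a scalar multiple of $\pi$'' exception is spurious --- the free-coordinate swing argument already rules out any nonzero $\pi'_j$ for $j\notin\{1,2,3\}$ whenever $S(\pi,\eta)\cap[0,1]^n\neq\emptyset$.
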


This negative result in the context of the use of split disjuctions  in branch-and-bound is in striking contrast to the case of cutting planes computed using split sets: for any rational polyhedral there exists a finite list of split sets such that cutting planes derived from an arbitrary split set are dominated by cutting planes derived using one split set from this finite list  \cite{andersen2005split,AVERKOV2012209,dash2017}. See Section~\ref{sec:proof3} for a proof of Theorem~\ref{thm:3}. 

\subsection{Lower bound on covering number for general finite list of dense disjunctions with $k \geq 4$ }
Given the negative result of Theorem~\ref{thm:3}, the next natural question to ask is if there exists finite list of $k$-sparse split sets, such that any other arbitrary $k$-sparse split set is a subset of an union of a small number of split sets from the list. Formally, given split sets $\{S(\pi^i, \eta^i ) \}_{i =0}^p \subseteq \mathbb{R}^n$, we say that $\{S(\pi^i, \eta^i ) \}_{i =1}^p$ dominates $S(\pi^0, \eta^0) $ if:
\begin{eqnarray}\label{eq:uniondom}
\left(\bigcup_{i = 1}^p S(\pi^i, \eta^i)\right) \cap [0, 1]^n \supseteq S(\pi, \eta) \cap [0, 1]^n.
\end{eqnarray}
If (\ref{eq:uniondom}) holds, then in any branch-and-bound tree that solves a binary MILP using the disjunction corresponding to $S(\pi^0, \eta^0)$, we may replace this disjunction by the disjunctions corresponding to $\{S(\pi^i, \eta^i ) \}_{i =0}^p$ resulting in a branch-and-bound tree whose size is no more than $2^{p -1}$ times the original branch-and-bound tree. 
\begin{definition}[Covering number for a finite list of $k$-sparse split sets]
Let $\mathcal{F}$ be a finite list of $k$-sparse split sets. Given an arbitrary $k$-sparse split set $S$, let $\mathcal{F}(S)$ be the smallest number of split sets from $\mathcal{F}$ that dominates $S$. We define the covering number of $\mathcal{F}$, denoted as $C(\mathcal{F})$, as:
$$C(\mathcal{F}):= \textup{max}\{\mathcal{F}(S)\,|\,S\ \textup{ is a }k\textup{-sparse split set}\}.$$ 
\end{definition}
If one can show that a finite list of $k$-sparse disjunctions has a small covering number, then it could be considered a theoretical justification for using just this finite list of pre-specified $k$-sparse disjunctions instead of general $k$-sparse disjunctions. 

The covering number of $\mathcal{F}_1$ is $k$, since, for example, in order to dominate the split set $\{x\in \mathbb{R}^k \,|\, k - 1 < \sum_{i = 1}^kx_i < k\}$ we require all the $k$ disjunctions $0 < x_i < 1$ for $i \in \{1,\ldots,k\}$.
Unfortunately, the next result indicates that it is not possible to find a finite list of disjunctions with significantly smaller covering number.
\begin{theorem}\label{thm:lower}
Let $\mathcal{F}$ be any finite list of $k$-sparse split sets. Then $C(\mathcal{F}) \geq \left\lfloor \frac{k}{2} \right\rfloor$.
\end{theorem}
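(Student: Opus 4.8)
The plan is to produce, for an arbitrary finite list $\mathcal{F}$, a single $k$-sparse split set $S$ together with $2h$ points $w_1,\dots,w_{2h}\in S\cap[0,1]^n$, where $h=\lfloor k/2\rfloor$, such that no split set in $\mathcal{F}$ contains three of these points. Granting this, every split set of $\mathcal{F}$ contains at most two of the $w_m$, so any sub-collection of $\mathcal{F}$ whose union contains $S\cap[0,1]^n$ — in particular all $2h$ points — must have size at least $h$; and if no finite sub-collection dominates $S$ then $\mathcal{F}(S)=\infty$. In either case $C(\mathcal{F})\ge\mathcal{F}(S)\ge\lfloor k/2\rfloor$, which is the claim. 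So it suffices to build $S$ and the $w_m$.

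For the construction I would use a ``staircase'' of near-vertex points. Set $M=\max\{1,\max\{\|\pi'\|_\infty:S(\pi',\eta')\in\mathcal{F}\}\}$, pick $2h\le k$ distinct coordinates $\sigma(1),\dots,\sigma(2h)$, and let $\pi$ be the vector that is $0$ outside these coordinates, equals $(-1)^{l+1}$ on $\sigma(l)$ for $l=1,\dots,2h-1$, and equals $-(M+1)$ on $\sigma(2h)$; take $S=S(\pi,0)$. Since $\|\pi\|_\infty=M+1>M$, $\pi$ is not a scalar multiple of any $\pi'$ occurring in $\mathcal{F}$ (so $S\notin\mathcal{F}$), and $\|\pi\|_0\le 2h\le k$, so $S$ is a legitimate $k$-sparse split set. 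For $m=1,\dots,2h$ put $v_m=\sum_{l<m}e_{\sigma(l)}\in\{0,1\}^n$ and $w_m=v_m+\varepsilon e_{\sigma(m)}$ for a small $\varepsilon\in(0,1/(M+1))$. A one-line check gives $w_m\in[0,1]^n$ (the $\sigma(m)$-th coordinate of $w_m$ is $\varepsilon$, the others are $0$--$1$), and since $\pi^\top v_m\in\{0,1\}$ while the alternating signs of $\pi$ make $\varepsilon\pi_{\sigma(m)}$ push $\pi^\top w_m$ into $(0,1)$, all $w_m$ lie in $S\cap[0,1]^n$.

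The decisive computation is: which split sets contain a given $w_m$? Writing $\Sigma^{\pi'}_j=\sum_{l\le j}\pi'_{\sigma(l)}$ for the partial sums of $\pi'$ along the order $\sigma$, we have $\pi'^\top w_m=\Sigma^{\pi'}_{m-1}+\varepsilon\pi'_{\sigma(m)}$ with $\Sigma^{\pi'}_{m-1}\in\mathbb{Z}$; as $\varepsilon$ is tiny this forces $w_m\in S(\pi',\eta')$ if and only if $\pi'_{\sigma(m)}\ne 0$ and $\eta'=\eta'_m$, where $\eta'_m=\Sigma^{\pi'}_{m-1}$ if $\pi'_{\sigma(m)}>0$ and $\eta'_m=\Sigma^{\pi'}_{m-1}-1$ if $\pi'_{\sigma(m)}<0$. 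In particular $w_m$ lies in no split normal to $\pi'$ whenever $\sigma(m)\notin\operatorname{supp}(\pi')$, so split sets of sparsity at most $2$ in $\mathcal{F}$ automatically contain at most two of the $w_m$; and a split set $S(\pi',\eta')$ contains three of the $w_m$ exactly when three indices share the same value $\eta'_m$.

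Thus everything reduces to choosing the order $\sigma$ (equivalently, the labelling of the staircase together with the sign pattern of $\pi$) so that for every $\pi'$ appearing in $\mathcal{F}$ the integers $\eta'_m$ over $\{m:\sigma(m)\in\operatorname{supp}(\pi')\}$ take no value three times — equivalently, so that the $\pm$-step walk $\Sigma^{\pi'}_0,\Sigma^{\pi'}_1,\dots$ never sits in a single length-two integer window for three of its nonzero steps. This selection is the main obstacle. A single $\pi'$ is easy (ordering the coordinates so that the positive entries of $\pi'$ precede its negative entries makes the walk unimodal, and then each level is crossed at most twice), but one ordering must work for all finitely many $\pi'\in\mathcal{F}$ at once, and such ``positives-first'' requirements can conflict across different $\pi'$. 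I expect the way through to be a careful combinatorial/counting argument exploiting that $\mathcal{F}$ is finite while the family of admissible pairs $(\pi,\sigma)$ is super-exponentially large in $k$ (with the slack between $\lfloor k/2\rfloor$ and the genuinely required bound absorbing the inefficiency), or a cleverer invariant than ``each level used at most twice''; this is where the real work lies, with everything preceding it being routine verification.
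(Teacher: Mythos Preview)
Your proposal has a genuine gap, and it is not merely that the ``main obstacle'' you flag is hard: for the most natural finite family $\mathcal{F}=\mathcal{F}_k$, the construction you describe cannot be repaired by any choice of $\sigma$.  Fix any ordering $\sigma$ and build your staircase points $w_m=v_m+\varepsilon e_{\sigma(m)}$.  Take $\pi'$ with $\pi'_{\sigma(l)}=(-1)^{l+1}$ for $l=1,\dots,2h-1$ and $\pi'_{\sigma(2h)}=0$; this is a $(2h-1)$-sparse vector with entries in $\{-1,0,1\}$, so $S(\pi',0)\in\mathcal{F}_k$.  A direct computation gives $\pi'^{\top}w_m=\varepsilon$ for odd $m\le 2h-1$ and $\pi'^{\top}w_m=1-\varepsilon$ for even $m\le 2h-1$, so $w_1,\dots,w_{2h-1}$ all lie in $S(\pi',0)$.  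Thus for \emph{every} ordering $\sigma$ there is a single split in $\mathcal{F}_k$ containing $2h-1$ of your $2h$ witnesses, and the ``at most two $w_m$ per split'' mechanism collapses to the bound $\lceil 2h/(2h-1)\rceil=2$ rather than $h$.  Notice that this bad $\pi'$ is essentially your own $\pi$ with the last coordinate zeroed out --- the alternating sign pattern that makes the $w_m$ sit inside $S$ is precisely what lets a nearby split in $\mathcal{F}_k$ swallow almost all of them.  So the difficulty is not a counting inefficiency to be absorbed by slack; the staircase-of-near-vertices idea is structurally too rigid to separate $\lfloor k/2\rfloor$ points pairwise against an adversarial finite list.

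The paper's argument proceeds along entirely different lines and avoids this obstruction.  Rather than one split set with many witness points, it uses an infinite one-parameter family $S^\theta$ (with coefficients $\theta,\theta^2,\dots,\theta^h$ on $h$ paired blocks of variables) and argues by pigeonhole: if $h-1$ splits always sufficed, then the \emph{same} $p\le h-1$ splits from $\mathcal{F}$ would dominate $S^\theta$ for infinitely many $\theta$.  The key step is then linear-algebraic, not combinatorial: the $p$ hyperplane equations $a^{(i)}x+b^{(i)}y=0$ together with $y=0$ form an underdetermined homogeneous system in $2h$ variables, yielding a nonzero direction $(x^*,0)$.  Perturbing a well-chosen $\{0,1\}$-vertex along this direction produces, for large $\theta$, a point lying in $S^\theta\cap[0,1]^{2h}$ yet on the same side of every $A^{(i)}$ as the vertex --- hence outside their union --- contradicting dominance.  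The moral is that the lower bound comes from a dimension count (fewer than $h$ splits cannot constrain $2h$ variables enough), not from a finite configuration of points.
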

See Section~\ref{sec:lower} for a proof of Theorem~\ref{thm:lower}.

\subsection{Covering number of $\{-1, 0, 1\}$-disjunctions}
Finally, since a number of papers have successfully employed the very natural list of disjunctions with coefficients only in $\{-1, 0, 1\}$, we explore the covering number of such finite list of disjunctions for sparsity level less or equal than $4$. 

\begin{proposition}\label{thm:upper}
For $k = 2,3,4$ we have that $C(\mathcal{F}_k) \leq k -1$. 
\end{proposition}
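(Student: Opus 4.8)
The plan is to exhibit, for an arbitrary $k$-sparse split set $S(\pi,\eta)$ with $k\in\{2,3,4\}$, an explicit cover by at most $k-1$ members of $\mathcal{F}_k$. The case $k=2$ is exactly Proposition~\ref{thm:2}, so I focus on $k\in\{3,4\}$, proving $k=3$ first and using it in the $k=4$ argument. First I would reduce to a normal form: permuting coordinates and applying the substitutions $x_i\mapsto 1-x_i$ — which map $[0,1]^n$ onto itself, map $\mathcal{F}_k$ onto itself, and send a $k$-sparse split set to a $k$-sparse split set — lets me assume that $\pi$ has full support $\{1,\dots,m\}$ with $m=\|\pi\|_0$ and $\pi_1\ge\pi_2\ge\cdots\ge\pi_m>0$; if $m<k$ the already-proved bound for sparsity $m$ uses at most $\max(m-1,1)\le k-1$ splits, so I may assume $m=k$. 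Applying the global flip $x\mapsto\mathbf{1}-x$ realizes the symmetry $S(\pi,\eta)\cong S(\pi,\,\|\pi\|_1-1-\eta)$, so I may also assume $\eta$ lies in the lower half of its nontrivial range $\{0,1,\dots,\|\pi\|_1-1\}$, i.e.\ $2\eta+1\le\|\pi\|_1$.

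Two tools drive the construction. \emph{(i) Single-split domination.} The set $S(\pi,\eta)\cap[0,1]^n$ is convex, hence connected; therefore, if for some $\sigma\in\{-1,0,1\}^n$ it meets no hyperplane $\{\sigma^\top x=j\}$, $j\in\mathbb{Z}$, then it lies in a single strip $\{c<\sigma^\top x<c+1\}$ and is dominated by the one split set $S(\sigma,c)\in\mathcal{F}_k$. Whether it meets $\{\sigma^\top x=j\}$ is decided by computing, over $\{x\in[0,1]^n:\sigma^\top x=j\}$, the minimum and maximum of $\pi^\top x$ (appropriate sums of the smallest, resp.\ largest, entries of the sorted $\pi$) and checking whether the open interval $(\eta,\eta+1)$ meets the resulting integer interval. \emph{(ii) Peeling.} The split set $S(e_1,0)$, whose restriction to the cube is $\{x\in[0,1]^n : 0<x_1<1\}$, contains every point of $S(\pi,\eta)\cap[0,1]^n$ with $x_1\in(0,1)$, so it only remains to cover the two face-slabs $S(\pi,\eta)\cap\{x_1=0\}$ and $S(\pi,\eta)\cap\{x_1=1\}$, which are $(k-1)$-sparse split sets with common coefficient vector $(\pi_2,\dots,\pi_k)$ and right-hand sides $\eta$, resp.\ $\eta-\pi_1$; if a face-slab is empty it costs nothing.

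For $k=3$ these tools close the argument via a clean dichotomy. Both face-slabs are nonempty if and only if $\pi_1\le\eta$ and $\pi_2+\pi_3\ge\eta+1$; and in that case $S(\pi,\eta)\cap[0,1]^n$ meets no integer level set of $x_1+x_2+x_3$ (the level $1$ is excluded by $\pi_1\le\eta$, the level $2$ by $\pi_2+\pi_3\ge\eta+1$, the levels $0$ and $3$ by $\eta\ge0$ and $\eta+1\le\|\pi\|_1$), so tool (i) gives a cover by a single member of $\mathcal{F}_3$. Otherwise at most one face-slab is nonempty, it is $2$-sparse and hence dominated by one member of $\mathcal{F}_2\subseteq\mathcal{F}_3$ by Proposition~\ref{thm:2}, which together with $S(e_1,0)$ gives two splits. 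For $k=4$ one first disposes by tool (i) of every instance already covered by a single member of $\mathcal{F}_4$. If not both $x_1$-face-slabs are nonempty, then peeling $x_1$ leaves at most one $3$-sparse face-slab, which by the $k=3$ case costs at most $2$ splits, for a total of at most $3$. The genuinely new sub-case is that both $x_1$-face-slabs are nonempty while no single split dominates $S(\pi,\eta)$; since failure of domination by $S((1,1,1,1),c)$ in that regime forces $\pi_1+\pi_2\ge\eta+1$ and $\pi_3+\pi_4\le\eta$, one checks that after additionally peeling $x_2$ the only surviving corners are $\{x_1=1,x_2=0\}$ and $\{x_1=0,x_2=1\}$, each a $2$-sparse slab with the same coefficients $(\pi_3,\pi_4)$ and right-hand sides differing by $\pi_1-\pi_2\le\pi_3+\pi_4-1$; a short case check on $(\pi_3,\pi_4)$, together with the fact that placing coefficients $\pm1$ on $x_1$ and $x_2$ shifts a strip's level by up to $2$ between these two corners, shows a single member of $\mathcal{F}_4$ dominates both, for a total of $3=k-1$ splits.

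The step I expect to be the real obstacle is the budget bookkeeping in the $k=4$ case. Naive recursion (peel one variable, recurse on both faces) spends $1+2\,C(\mathcal{F}_{k-1})$ splits, which overshoots $k-1$, so the argument must produce, in every normalized regime, enough ``free'' simplifications — empty faces or corners, sub-slabs that are integer scalings of $\{-1,0,1\}$-slabs, or opposite faces coverable by one level-shifted split — to stay within $k-1$. The delicate point is the last item in the previous paragraph: the inequality $\pi_1-\pi_2\le\pi_3+\pi_4-1$ is exactly what makes the required level shift (at most $2$) fit into what a $\{-1,0,1\}$-coefficient on two peeled variables can supply, and this is tight; it is this accounting, rather than any single estimate, that breaks for $k\ge5$ and leaves the gap between the lower bound $\lfloor k/2\rfloor$ of Theorem~\ref{thm:lower} and $k-1$. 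A minor technical point to keep straight throughout is that the lower-half normalization of $\eta$ is (up to a further harmless flip) inherited by the face-slabs, so the recursion can be applied to them cleanly.
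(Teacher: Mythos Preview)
Your treatment of $k=2$ and $k=3$ is correct and coincides with the paper's argument up to relabeling: the paper orders $\pi_1\le\cdots\le\pi_k$ and peels the coordinate with the largest coefficient, but the dichotomy (both faces nonempty $\Rightarrow$ single $(1,1,1)$-strip; otherwise peel and recurse) is the same.

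For $k=4$ there is a genuine gap at precisely the spot you flagged. In the residual regime (both $x_1$-faces nonempty, no single split dominates, hence $\pi_1+\pi_2\ge\eta+1$ and $\pi_3+\pi_4\le\eta$) you peel $x_1,x_2$, correctly observe that only the corners $(x_1,x_2)=(1,0)$ and $(0,1)$ survive, and then assert that a single member of $\mathcal{F}_4$ covers both. This last step is false. Take $\pi=(5,3,3,1)$, $\eta=5$ (all your hypotheses hold, including $2\eta+1\le\|\pi\|_1$ and $\pi_1-\pi_2=2\le 3=\pi_3+\pi_4-1$). The corner slab at $(1,0)$ is $\{0<3x_3+x_4<1\}\cap[0,1]^2$; running through all $(\sigma_3,\sigma_4)\in\{-1,0,1\}^2$ one finds it lies in a width-one strip $\{c'<\sigma_3 x_3+\sigma_4 x_4<c'+1\}$ only for $(\sigma_3,\sigma_4)=\pm(1,1)$. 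The corner slab at $(0,1)$ is $\{2<3x_3+x_4<3\}\cap[0,1]^2$, which meets the hyperplane $x_3+x_4=1$ (e.g.\ at $(0.7,0.3)$) and lies in a width-one strip only for $(\sigma_3,\sigma_4)=\pm(1,0)$. Since the admissible $(\sigma_3,\sigma_4)$ for the two corners are disjoint, no single split in $\mathcal{F}_4$ can dominate both, whatever $\sigma_1,\sigma_2$ you choose. Your bound $\pi_1-\pi_2\le\pi_3+\pi_4-1$ controls only the \emph{level shift} between the two $2$-sparse slabs, not the possibility that they land in different cases of Proposition~\ref{thm:2} and therefore require different strip \emph{directions} in $(x_3,x_4)$.

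The paper does not attempt a uniform ``peel-the-two-largest'' recursion for $k=4$; instead it runs a direct ten-way case analysis on the signs of $\pi_i+\pi_j-(\eta+1)$ and prescribes three splits case by case. Your counterexample (in the paper's ordering $\pi=(1,3,3,5)$) falls into the case $\pi_2+\pi_3\ge\eta+1$, $\pi_1+\pi_4\ge\eta+1$, where the cover is $\{0<x_1<1\}$, $\{0<x_4<1\}$, $\{1<x_2+x_3+x_4<2\}$ --- peeling the coordinates with the smallest \emph{and} largest coefficients rather than the two largest. If you want to salvage your cleaner recursive scheme, you will at minimum need to choose the peeled pair adaptively based on which sub-case of Proposition~\ref{thm:2} each corner slab falls into; peeling the two largest coefficients is not always the right move.
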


See Section~\ref{sec:upper} for a proof of Proposition~\ref{thm:upper}.
\section{Conclusions}
The results of this paper justify the use of pre-specified list of disjunctions with coefficients in $\{-1, 0, 1\}$ for low levels of sparsity. For $k=2$, Proposition~\ref{thm:2} provides this justification. For a branch-and-bound tree using $3$-sparse disjunctions, Theorem~\ref{thm:lower} and Proposition~\ref{thm:upper} imply that any finite list has a covering number of at least $2$ and 
$\mathcal{F}_3$ also has a covering number of $2$. Thus with respect to covering number, it is optimal to limit the use of disjunctions from $\mathcal{F}_3$.
It is an open question if 
$\mathcal{F}_k$ is optimal for higher values of $k$ with respect to covering number. In order to answer this question, results of both Theorem~\ref{thm:lower} and Proposition~\ref{thm:upper} may need to be tightened and generalized.  

More generally, Theorem~\ref{thm:lower} may also be an indication that the use of pre-specified list of disjunctions may not be the best way to generate small branch-and-bound trees. While using disjunctions in $\mathcal{F}_k$ 
already produces smaller branch-and-bound trees than those produced using $1$-sparse  disjunctions~\cite{owen2001experimental,mehrotra2011branching,fukasawa2020split,yang2021multivariable}, in order to truly obtain significantly smaller branch-and-bound trees, one may need to further develop and expand on methods to select problem-specific dense disjunctions that are not pre-specified~\cite{aardal2004hard,mahajan2009experiments,karamanov2011branching,cornuejols2011improved,mahmoud2013achieving,yang2021multivariable,munoz2023compressing}.

\section{Proof of Proposition~\ref{thm:2}} \label{sec:proof2}

In order to prove Proposition~\ref{thm:2} ($k=2$) and Proposition~\ref{thm:upper} ($k=3,4$) in Section \ref{sec:upper}, we have to show that for any given arbitrary split set $S = \{ x \in \mathbb{R}^k\,|\, \eta < \pi^{\top}x < \eta +1 \}$ at most $k-1$ split sets from $\mathcal{F}_k$ are needed to dominate it. Without loss of generality, we may assume that $0 \leq \pi_1\leq \pi_2 \leq \ldots \leq \pi_k$. This is because, if $\pi_i < 0$ we can change $x_i$ to $1-x_i$, and then permute the order of the variables. Note that this is fine because $\mathcal{F}_k$ is closed under taking the same operations. 

\begin{proof}[Proof for $k =2$] We assume $\|\pi\|_0 = 2$, since otherwise the result is trivial. 
Let $x\in S\cap[0,1]^2$. We consider the following cases.

\begin{itemize}
\item $0\leq\eta<\eta+1\leq \pi_1$: 
For $x \in S$, we have  $x_1+x_2\leq x_1+\frac{\pi_2}{\pi_1}x_2  = \frac{\pi_1 x_1 + \pi_2 x_2}{\pi_1}<\frac{\eta+1}{\pi_1}\leq 1.$ Since $(0,0) \notin S$, we obtain $S \cap [0,1]^2 \subseteq \{x \in [0, 1]^2 \,|\,0<x_1+x_2<1\}$.
	
\item $0 \leq \pi_1\leq \eta;\ \eta+1\leq\pi_2$: We have $0\leq\frac{\eta-\pi_1}{\pi_2}\leq \frac{\eta-\pi_1x_1}{\pi_2}<x_2$ for $x\in S$. On the other hand, for $x \in S$ we have $x_2\leq \frac{\pi_1}{\pi_2}x_1+x_2< \frac{\eta+1}{\pi_2}\leq 1$. Thus, $S \cap [0,1]^2 \subseteq \{x \in [0, 1]^2 \,|\, 0<x_2<1\}$.
		
\item  $0 \leq \pi_1\leq \eta;\ 0 \leq \pi_2 \leq \eta$: We have $1<\frac{\pi_1}{\eta}x_1+\frac{\pi_2}{\eta}x_2\leq x_1 + x_2$ for $x\in S$. Since $(1,1)\notin S$, we obtain $S \cap [0,1]^2 \subseteq \{x \in [0, 1]^2 \,|\ 1< x_1 + x_2<2\}$.
\end{itemize}
\end{proof}
\section{Proof of Theorem~\ref{thm:3}} \label{sec:proof3}
We will prove Theorem~\ref{thm:3} for $k =3$. A similar proof can be given for $k\geq 4$, but the result in this case is implied by Theorem~\ref{thm:lower} so we do not consider it in this section. 
\begin{proof}[Proof of Theorem~\ref{thm:3}]
In order to prove Theorem~\ref{thm:3}, we show that for the infinite family of split  sets
$$S_\gamma=\{x\in\mathbb{R}^3\, |\, \gamma<x_1+\gamma x_2+(\gamma+1)x_3<\gamma+1\},$$
\noindent where $\gamma\in\mathbb{Z}_+, \gamma\geq 1$, there is no split set in $\mathbb{R}^3$ that  contains $S_\gamma \cap [0, 1]^3$ for infinitely many values of $\gamma$. Assume for a contradiction that  there exists an split set $S=\{x\in\mathbb{R}^3 \, |\,  \eta<\pi^Tx<\eta+1\}$,
where $\pi \in(\pi_1, \pi_2, \pi_3) \in\mathbb{Z}^3,\eta\in\mathbb{Z}$, such that $S$ dominates $S_\gamma$ for infinitely many $\gamma\in\mathbb{Z}_+$, that is, 
\begin{equation}\label{eq:inclusion}
S_\gamma\cap[0,1]^3\subseteq S\cap[0,1]^3 \ \forall \gamma \in \Gamma \quad (\Rightarrow \overline{S}_\gamma\cap[0,1]^3\subseteq \overline{S}\cap[0,1]^3, \ \forall \gamma \in \Gamma),
\end{equation}
where $\overline{S}_\gamma$ and $\overline{S}$ are closure of ${S}_\gamma$ and ${S}$ respectively, and $\Gamma \subseteq \mathbb{Z}_+$ is an infinite set.

We first show that:
\begin{align}\label{eq:rev_inclusion_L}
H^0:=\{x\in [0,1]^3\,|\,\pi^Tx\leq \eta\}&\subseteq \{x\in [0,1]^3\,|\,x_1+\gamma x_2+(\gamma+1)x_3\leq \gamma\}=:H_\gamma^0,\\
\label{eq:rev_inclusion_R}
H^1:=\{x\in [0,1]^3\,|\,\pi^Tx\geq \eta+1\}&\subseteq \{x\in [0,1]^3\,|\,x_1+\gamma x_2+(\gamma+1)x_3\geq \gamma+1\}=:H_\gamma^1.
\end{align}

Notice that we must have $H_\gamma^j\cap\{0,1\}^3\neq\emptyset$ for all $j\in \{0,1\}$; otherwise if, for instance $H_\gamma^0\cap\{0,1\}^3=\emptyset$, then we would have $\{0,1\}^3\subseteq H_\gamma^1$ which implies $[0,1]^3\subseteq H_\gamma^1$, a contradiction with the fact $S_\gamma\cap [0,1]^3 \neq \emptyset$. On the other hand, observe that $S_\gamma$ being dominated by $S$ is equivalent to: for all $i\in \{0,1\}$ there exists $j\in \{0,1\}$ such that $H^i\subseteq H_\gamma^j$. Since $(H^0\cup H^1)\cap\{0,1\}^3=\{0,1\}^3$, we conclude that it cannot happen that $H^0\subseteq H_\gamma^j$ and $H^1\subseteq H_\gamma^j$ for the same $j$ since $H_\gamma^i\cap\{0,1\}^3\neq\emptyset$ for $i\neq j$. Therefore, \eqref{eq:rev_inclusion_L} and \eqref{eq:rev_inclusion_R} hold (we may assume that  we have $H^i\subseteq H_\gamma^i$ for $i=\in \{0,1\}$ by considering $S$ to be defined by $\hat\pi=-\pi$ and $\hat\eta=-\eta-1$ if necessary).

Since $(0,1,0)$ satisfies the equation $x_1+\gamma x_2+(\gamma+1)x_3= \gamma$ and by \eqref{eq:rev_inclusion_L} we have $H^0\cap\{0,1\}^3=H_\gamma^0\cap\{0,1\}^3$, we must have that $(0,1,0)$ satisfies the inequality $\pi^Tx\leq \eta$. We now show that $(0,1,0)$ must satisfy $\pi^Tx= \eta$. Assume for a contradiction that it satisfies $\pi^Tx< \eta$. Let $x_0 \in S_\gamma\cap [0,1]^3$ be an arbitrary point. For $\lambda>0$ small enough we have that the point $x_\lambda=(0,1,0)+\lambda (x_0-(0,1,0))$ satisfies $\pi^Tx_\lambda< \eta$ and, by convexity of $\overline{S}_\gamma\cap[0,1]^3$, that $x_\lambda\in S_\gamma\cap [0,1]^3$.
Since $S_\gamma\cap [0,1]^3\subseteq S\cap [0,1]^3$, it follows that $x_\lambda\in S\cap [0,1]^3$, a contradiction with the fact that  $\pi^Tx_\lambda< \eta$. Thus, we must have that 
$\pi_2=\eta$. By a similar argument, since $(1,1,0)$ and $(0,0,1)$ satisfy $x_1+\gamma x_2+(\gamma+1)x_3= \gamma+1$, it follows from \eqref{eq:rev_inclusion_R} that we must have that these points satisfy $\pi^Tx= \eta+1$, and therefore $\pi_1+\pi_2=\eta+1$ and $\pi_3=\eta+1$. Therefore, we obtain that $\pi_1=1$, $\pi_2=\eta$ and $\pi_3=\eta+1$.

Since $(1,0,\gamma/(\gamma+1))\in\overline{S}_\gamma\cap[0,1]^3$, by \eqref{eq:inclusion} we obtain
$1 + (\eta+1)\frac{\gamma}{\gamma+1}\leq \eta+1 \Leftrightarrow 1 \leq \frac{\eta +1}{\gamma +1}.$ Since this inequality holds for any $\gamma\in\Gamma$, we obtain $1\leq 0$, a contradiction.

\end{proof}

\section{Proof of Theorem~\ref{thm:lower}}\label{sec:lower}
We first prove the result when the sparsity level is an even positive integer $2k$. 

Consider the following family of split sets parameterized by a positive integer $\theta$:
\begin{align*}
    \begin{array}{rl}
    & S^{\theta} = \left\{ (x,y) \in \mathbb{R}^k \times \mathbb{R}^k\,\left|\, \sum\limits_{i=1}^k \theta^i < \sum\limits_{i=1}^k \theta^i (x_i + y_i) < 1+ \sum\limits_{i=1}^k \theta^i \right.\right\}.
    \end{array}
\end{align*}

In order to prove Theorem~\ref{thm:lower} it is sufficient to prove the following result:
\begin{lemma}   \label{lem_result}
    For every finite collection of split sets $\mathcal{F}$, there exists $\theta \in \mathbb{Z}_+$ with $\theta \geq 1$, such that one needs at least an union of $k$ split  sets from $\mathcal{F}$ to dominate $S^{\theta} \cap [0,1]^{2k}$. 
\end{lemma}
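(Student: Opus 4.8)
The plan is to fix the finite family $\mathcal{F}$, set $M := \max\{\|\pi\|_\infty : S(\pi,\eta)\in\mathcal{F}\}$ (finite since $\mathcal{F}$ is finite), and prove the contrapositive: for every $\theta\in\mathbb{Z}_+$ large enough (as a function of $M$ and $k$), no subcollection $\mathcal{G}\subseteq\mathcal{F}$ with $|\mathcal{G}|\le k-1$ satisfies $\bigcup_{S\in\mathcal{G}}S\cap[0,1]^{2k}\supseteq S^\theta\cap[0,1]^{2k}$. Throughout I would use the change of variables $z_i = x_i+y_i-1\in[-1,1]$, under which $S^\theta\cap[0,1]^{2k}$ is the set of $(x,y)\in[0,1]^{2k}$ whose $z$-image lies in the \emph{thin} slab $Z^\theta := \{z\in[-1,1]^k : \sum_i\theta^i z_i\in(0,1)\}$, the fiber over a feasible $z$ being the box $\prod_i(\{x_i+y_i=1+z_i\}\cap[0,1]^2)$. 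The point of the weights $\theta^1,\dots,\theta^k$ is that the normal $(\theta,\theta^2,\dots,\theta^k,\theta,\dots,\theta^k)$ of $\overline{S^\theta}$ is, for $\theta$ large, far in direction from every nonzero integer vector of $\infty$-norm at most $M$, so no single split set of $\mathcal{F}$ is "parallel enough" to the plate $\overline{S^\theta\cap[0,1]^{2k}}$ to cover it.

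First I would record the geometry of $\overline{S^\theta\cap[0,1]^{2k}}$. For $\theta$ large the bounding hyperplane $\sum_i\theta^i z_i=0$ meets $[-1,1]^k$ in a parallelepiped which, after dropping the (geometrically pinned, $O(1/\theta)$-small) last coordinate, is an affine copy of $[-1,1]^{k-1}$; lifting through the fibers gives a distinguished $(2k-1)$-dimensional "lower" face $F$ of $\overline{S^\theta\cap[0,1]^{2k}}$ (and a mirror upper face). As in the proof of Theorem~\ref{thm:3}, I would then use a closure/accumulation argument: if $\mathcal{G}$ dominates $S^\theta\cap[0,1]^{2k}$ then $\bigcup_{S\in\mathcal{G}}\overline{S}\supseteq F$ (and the upper face). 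The crucial additional feature of $F$ is that, for each coordinate $l\in\{1,\dots,k\}$, $F$ contains configurations that probe coordinate $l$ \emph{at scale} $\theta^{-l}$ — e.g. the vertex lifting $z^{(l)}=(1,\dots,1,-\tfrac{\theta+\cdots+\theta^{l-1}}{\theta^{l}},0,\dots,0)$ (the $l$-th entry being the $O(1/\theta)$ one), together with the small cone of $\overline{S^\theta\cap[0,1]^{2k}}$ around it — and these scales $\theta^{-1},\dots,\theta^{-k}$ are mutually incomparable once $\theta\gg M$.

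The core step is then a magnitude/incomparability argument. The claim is: for $\theta$ large, a single split set $S(\pi,\eta)$ with $\|\pi\|_\infty\le M$ cannot help cover the scale-$\theta^{-l}$ configuration around coordinate $l$ for two distinct values of $l$. Concretely, requiring that $\overline{S}$ contains the scale-$\theta^{-l}$ configuration pins down $\eta$, and the $l$-th block of $\pi$, up to an additive error of order $\theta^{-l}M$ (via the argument that $\pi$, or $-\pi$, must lie in the dual of the tangent cone of $\overline{S^\theta\cap[0,1]^{2k}}$ at the relevant vertex, whose only unbounded generator is the $\theta$-weighted normal); the constraints produced by two different scales $\theta^{-l}$, $\theta^{-l'}$ are then incompatible unless they coincide. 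Combined with the fact (from the face $F$ and its coordinate-$l$ sections) that each of the $k$ coordinates must be "served" by some member of $\mathcal{G}$, a pigeonhole argument yields $|\mathcal{G}|\ge k$, which is Lemma~\ref{lem_result}; it gives Theorem~\ref{thm:lower} in even sparsity $2k$ since $C(\mathcal{F})\ge\mathcal{F}(S^\theta)\ge k=\lfloor 2k/2\rfloor$. I expect this incomparability step to be the main obstacle: the faces of $\overline{S^\theta\cap[0,1]^{2k}}$ are combinatorially rich, split sets in $\mathcal{F}$ may spread their coefficients arbitrarily over the $2k$ coordinates (in particular over each paired $x_i$ versus $y_i$), and — most delicately — one must rule out that a coordinate's scale-$\theta^{-l}$ configuration is covered "in pieces" by several split sets whose normals are nearly orthogonal to the relevant directions; this is exactly where the geometric spacing of $\theta^1,\dots,\theta^k$ and the boundedness $M$ of $\mathcal{F}$ must be quantitatively pitted against each other.

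Finally, for odd sparsity $2k+1$ I would reuse the same split set $S^\theta\subseteq\mathbb{R}^{2k}$, viewed in $\mathbb{R}^{2k+1}$ by ignoring the last coordinate (still $2k\le 2k+1$-sparse). Slicing the split sets of any $\mathcal{G}\subseteq\mathcal{F}$ at a fixed value of $x_{2k+1}$ turns them into bounded-coefficient "generalized split sets" in $\mathbb{R}^{2k}$ — the offsets may cease to be integral, but integrality of the offset is never used above, only $\|\pi\|_\infty\le M$ — so the same counting applies for any fixed value of the last coordinate, and an uncovered point of $S^\theta\cap[0,1]^{2k}$ at that slice yields an uncovered point of $S^\theta\cap[0,1]^{2k+1}$; hence $C(\mathcal{F})\ge\mathcal{F}(S^\theta)\ge k=\lfloor(2k+1)/2\rfloor$.
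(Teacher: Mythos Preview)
Your proposal has a genuine gap at exactly the place you flag as ``the main obstacle.'' The pigeonhole you want needs two ingredients: (a) each of the $k$ scale-$\theta^{-l}$ configurations must be \emph{served} by some single member of $\mathcal{G}$, and (b) no single split set with $\|\pi\|_\infty\le M$ can serve two distinct configurations. You establish neither. For (a), domination only gives that each configuration is covered by the \emph{union} $\bigcup_{S\in\mathcal{G}}\overline{S}$, and you yourself note that a configuration might be covered ``in pieces''; nothing you wrote promotes this to coverage by a single $\overline{S}$. For (b), your sketch (``pins down $\eta$ and the $l$-th block of $\pi$ up to additive error $\theta^{-l}M$'') is not an argument: a split set in $\mathcal{F}$ can have support spread arbitrarily over the $2k$ coordinates, including asymmetrically over the paired $x_i,y_i$, so it is unclear what ``the $l$-th block of $\pi$'' even constrains, and the dual-cone heuristic you invoke does not by itself yield incompatibility between two scales. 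As written, the core step is a hope, not a proof.

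The paper's argument avoids this geometric analysis entirely and is much shorter. After pigeonholing (since $\mathcal{F}$ is finite) to fix $p\le k-1$ split sets $A^{(i)}=\{c^{(i)}<a^{(i)}x+b^{(i)}y<c^{(i)}+1\}$ that dominate $S^\theta$ for infinitely many $\theta$, it observes that dominance is equivalent to: every cell $A_u\cap[0,1]^{2k}$ (where $A_u=\bigcap_i A^{(i)}_{u_i}$) lies on one side of $S^\theta$. Now solve the underdetermined linear system
\[
a^{(i)}x+b^{(i)}y=0\ (i=1,\dots,p),\qquad y=0,
\]
which has $p+k<2k$ equations in $2k$ unknowns, for a nonzero $(x^*,0)$; normalize so that the last nonzero entry of $x^*$ is positive. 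Pick the binary point $(s,t)$ with $s_i+t_i=1$ and $s_i=0$ iff $x^*_i\ge 0$; it lies in some cell $A_{u^*}$. Moving from $(s,t)$ in direction $(x^*,0)$ stays in $A_{u^*}$ (the direction is in every kernel) and in $[0,1]^{2k}$ for small $\lambda>0$ (by the choice of $s$), yet enters $S^\theta$ because $\sum_i\theta^i(s_i+t_i)=\sum_i\theta^i$ and $\sum_i\theta^i x^*_i>0$ for $\theta$ large. This single point witnesses failure of dominance. The role of the weights $\theta^i$ here is not a multi-scale separation as in your plan, but merely to guarantee the sign of $\sum_i\theta^i x^*_i$ from the sign of the leading nonzero coordinate of $x^*$---a one-line use, not a quantitative one.
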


Before presenting the proof of Lemma~\ref{lem_result}, we introduce some notation. Consider a list of split sets
 $A^{(i)} := \left\{ (x,y) \,|\, c^{(i)} < a^{(i)} x + b^{(i )} y^{(i)} < c^{(i)} + 1\right\},\ \textup{for}\ i=1,\ldots,p.$
For each $i$, we denote the two connected components of the complement set to $A^{(i)}$  by
$$A^{(i)}_0 :=  \left\{ (x,y) \,|\, a^{(i)} x + b^{(i )} y^{(i)} \leq c^{(i)}\right\}\quad \textup{and}\quad A^{(i)}_1 :=  \left\{ (x,y) \,|\, a^{(i)} x + b^{(i )} y^{(i)} \geq c^{(i)} + 1\right\}.$$

Given a binary vector $u \in \{0,1\}^p$, we further define $A_{u} = \bigcap_{i=1}^p A^{(i)}_{u_i}$.
Note that the fact that $\bigcup_{i=1}^p A^{(i)}$ dominates $S^{\theta} $ can be written as:
$$S^{\theta} \cap [0,1]^{2k} \subseteq \left(\bigcup_{i \in [g]} A^{(i)}\right) \cap [0,1]^{2k} \
    \Leftrightarrow \  [0,1]^{2k} \setminus S^{\theta} \supseteq [0,1]^{2k} \setminus \left(\bigcup_{i \in [g]} A^{(i)}\right).$$
So dominance of the given list of split sets is equivalent to:
\begin{equation}\label{key_relationship}
    \forall u \in \{0,1\}^p, \text{ either }  A_{u} \cap [0,1]^{2k} \subseteq S^{\theta}_0 \cap [0,1]^{2k} \text{ or } A_{u} \cap [0,1]^{2k} \subseteq S^{\theta}_1 \cap [0,1]^{2k}. 
\end{equation}

Now we present a proof of Lemma \ref{lem_result}.
\begin{proof} 
We argue by contradiction. Suppose one needs at most $k -1$ split sets from $\mathcal{F}$ to dominate $S^{\theta}$  for all $\theta$.
Since $\mathcal{F}$ is finite, but there are infinitely many choices of $S^\theta$, 
there must exist  $p$ split sets from $\mathcal{F}$, where $p\leq k -1$, and an infinite set $\Theta \subseteq \mathbb{Z}_+$ such that those $p$ split  sets dominate $S^\theta \cap [0,1]^{2k}$ for all $\theta \in \Theta$. We denote those split sets by 
\begin{align*}
    A^{(i)} := \left\{ (x,y) \,|\, c^{(i)} < a^{(i)} x + b^{(i )} y^{(i)} < c^{(i)} + 1\right\},\ \textup{for}\ i=1,\ldots,p.
\end{align*} 

We will show that  $(\ref{key_relationship})$ fails for sufficiently large $\theta\in \Theta$. Our main idea is to construct a certain point $z \in [0,1]^{2k}$ such that $z \in A_{u} \cap [0,1]^{2k}$ for some $u$ but $z$ violates $(\ref{key_relationship})$. 

Consider the following linear system:
\begin{align}
     \label{eq1a} a^{(i)} x + b^{(i )} y = 0\ &\quad\textup{for}\ i=1,\ldots,p \\
     \label{eq1b} y_i = 0\ &\quad\textup{for}\ i=1,\ldots,k.
\end{align}

This linear system has $2k$ variables and $k+p$ constraints. Since $k + p< 2k$ it has at least one non-zero solution $(x^*,y^*)$. Without loss of generality, we may assume that $\|{(x^*,y^*)}\|_2 = 1$ and $x^*_j > 0$ where $j$ is the largest index $i=1,\ldots, k$ such that $x^*_i \neq 0$. 

By \eqref{eq1b} and for sufficiently large $\theta\in \Theta$ we have that
\begin{align}
    \label{eq2} \sum_{i =1}^k \theta^i (x_i^* + y_i^* ) = \sum_{i =1}^j \theta^i x_i^* > 0.
\end{align}

We now construct a binary vector $(s, t) \in \{0,1\}^{2k}$ in the following way:
$$ s_i = 0,  t_i = 1 \text{ if } x^*_i \geq 0\quad\textup{and}\quad  s_i = 1, t_i = 0 \text{ if }  x^*_i < 0.$$


Notice that  since $(s, t)$ is a integer vector, it must belong to either $A^{(i)}_0$ or $A^{(i)}_1$ for all $i=1,\ldots,p$ and therefore $(s, t) \in A_{u^*} \cap [0,1]^{2k}$ for some $u^*$. 

We now verify that $(s, t) + \lambda (x^*,y^*) \in A_{u^*} \cap [0,1]^{2k}$ for some sufficiently small $\lambda > 0$. Indeed, $(s, t) + \lambda (x^*,y^*)$ stays in $A_{u^*}$ for any $\lambda>0$ because of \eqref{eq1a}. On the other hand, $(s, t) + \lambda (x^*,y^*)$ stays in $[0,1]^{2k}$ for sufficiently small $\lambda > 0$ because $t_i + \lambda y^*_i$ does not change due to \eqref{eq1b}, components associated to $s_i=1$ decrease a little and components associated to $s_i=0$ increase a little.

Now observe that $\sum\limits_{i \in [k]} \theta^i (s_i + t_i ) = \sum\limits_{i \in [k]} \theta^i$ and $\sum\limits_{i \in [n]} \theta^i (x_i^* + y_i^* ) > 0$ by (\ref{eq2}), hence we obtain that
\begin{align*}
    \sum\limits_{i =1}^k \theta^i < \sum\limits_{i =1}^k \theta^i (s_i + \lambda x_i^*   + t_i  + \lambda y_i^*)  < 1 + \sum\limits_{i =1}^k \theta^i,
\end{align*}
for sufficiently small $\lambda >0$. 
In other words, for sufficiently small $\lambda>0$ and large enough $\theta\in \Theta$  we have that $(s, t) + \lambda (x^*,y^*) \in S^{\theta}\cap ( A_{u^*} \cap [0,1]^{2k})$. We conclude that $(\ref{key_relationship})$ is not satisfied for the point $(s, t) + \lambda (x^*,y^*)$, a contradiction.
\end{proof}

In order to prove Theorem~\ref{thm:lower} for odd sparsity levels of split disjunctions, a similar proof can be presented using the family of split sets:
\begin{align*}
    \begin{array}{rl}
    & S^{\theta} := \left\{ (x,y) \in \mathbb{R}^k \times \mathbb{R}^{k+ 1}: \sum\limits_{i=1}^k \theta^i < \sum\limits_{i=1}^k \theta^i (x_i + y_i) + y_{k +1} < 1+ \sum\limits_{i=1}^k \theta^i\right\}.
    \end{array}
\end{align*}

\section{Proof of Proposition~\ref{thm:upper}}\label{sec:upper}
The case $k=2$ is proven in Proposition~\ref{thm:2}. We now consider the cases $k =3,4$. 


\begin{proof}[Proof for $k =3$] Let  $S= \{ x \in \mathbb{R}^3\,|\, \eta < \pi^{\top}x < \eta +1 \}$, recall that we may assume that $0 \leq \pi_1\leq \pi_2 \leq \pi_3$ (see Section \ref{sec:proof2}). 
We have to show that at most $2$ split sets from $\mathcal{F}_3$ are needed to dominate it. There are three cases:
\begin{itemize}
\item $\pi_3 \geq \eta + 1$: In this case, observe that $x \in S$, implies that $x_3 < 1$. By Proposition~\ref{thm:2}, we know there exist $1$ split set of sparsity $2$ (or lesser) from $\mathcal{F}_3$ whose union contains the set $\{x \in [0, 1]^3\,|\, x \in S, x_3 = 0\}$. The set of points in $\{ x \in [0, 1]^3 \,|\, x \in S, 0 < x_3 < 1\}$ is contained in the split set $0 < x_3 < 1$.
\item $\pi_1 + \pi_2 \leq \eta$: In this case, observe that $x \in S$, implies that $x_3 > 0$. By Proposition~\ref{thm:2}, we know there exist $1$ split set of sparsity $2$ (or lesser) from $\mathcal{F}_3$ whose union contains the set $\{x \in [0, 1]^3\,|\, x \in S, x_3 = 1\}$. The set of points in $\{ x \in [0, 1]^3 \,|\, x \in S, 0 < x_3 < 1\}$ is contained in the split set $0 < x_3 < 1$.

\item $\pi_3 \leq \eta$ and $\pi_1 + \pi_2 \geq \eta +1$: 
Since $\pi_1\leq \pi_2 \leq \pi_3 \leq \eta$, if $\sum_{j = 1}^3 x_j \leq 1$, then $\sum_{j = 1}^3 \pi_jx_j \leq \eta$. Thus, $x \in S$ implies that $\sum_{j = 1}^3 x_j > 1$. Moreover, if $\sum_{j = 1}^3 x_j \geq 2$, then $\sum_{j =1}^3\pi_jx_j \geq \pi_1 + \pi_2 \geq \eta + 1$. Thus $x \in S$ implies that $\sum_{j = 1}^3 x_j < 2$. Therefore, $S$ is dominated by the split set $\{x \in \mathbb{R}^3\,|\, 1 < \sum_{j =1}^3 x_j <2 \}$. 
\end{itemize}
\end{proof}


\begin{proof}[Proof for $k =4$] Let $S= \{ x \in \mathbb{R}^4\,|\, \eta < \pi^{\top}x < \eta +1 \}$ with $0 \leq \pi_1\leq \pi_2 \leq \pi_3\leq \pi_4$. 
There are ten cases:
\begin{itemize}
\item $\pi_4 \geq \eta + 1$: In this case, observe that $x \in S$, implies that $x_4 < 1$. By Proposition~\ref{thm:upper} for $k =3$ case, we know there at most $2$ split set of sparsity $3$ (or lesser) from $\mathcal{F}_4$ whose union contains the set $\{x \in [0, 1]^4\,|\, x \in S, x_4 = 0\}$. The set of points in $\{ x \in [0, 1]^4 \,|\, x \in S, 0 < x_4 < 1\}$ is contained in the split set $0 < x_4 < 1$.

\item $\pi_1 + \pi_2  + \pi_3 \leq \eta$: In this case, observe that $x \in S$, implies that $x_4 > 0$. By Proposition~\ref{thm:upper} for $k = 3$, we know there at most $2$ split set of sparsity $3$ (or lesser) from $\mathcal{F}_4$ whose union contains the set $\{x \in [0, 1]^4\,|\, x \in S, x_4 = 1\}$. The set of points in $\{ x \in [0, 1]^4 \,|\, x \in S, 0 < x_4 < 1\}$ is contained in the split set $0 < x_4 < 1$.

\item $\pi_1 + \pi_2 \geq \eta +1$: We may assume that $\pi_4 \leq \eta$. Thus, we have $x\in S$ implies $1 < x_1 + x_2 + x_3 + x_4$. On the other hand, we also must have $x_1 + x_2 + x_3 + x_4 <2$, since otherwise, $\sum_{j =1}^4\pi_jx_j \geq \pi_1 + \pi_2 \geq \eta +1$. Thus $S \cap [0, 1]^4$ is contained in $1 < x_1+ x_2 + x_2 + x_3 + x_4 <2$.

\item $\pi_1 + \pi_3 \geq \eta + 1$: We may assume $\pi_4 \leq \eta$ and $\pi_1 + \pi_2 \leq \eta$. We claim that $S$ is contained in the union of $1 < x_1 + x_2 + x_3 + x_4 < 2$ and $0< x_3 + x_4 < 1$. 
Consider the following cases for $x \in S$:
\begin{itemize}
\item If $x_1 + x_2 \leq 1$: We claim that that $x_1 + x_2 + x_3 + x_4 < 2$. Assume by contradiction $x_1 + x_2 + x_3 + x_4 \geq 2$. Then we have $x_3 + x_4 \geq 1$ and thus 
$\sum_{j =1}^4\pi_jx_j \geq \pi_1 \cdot \textup{min}\{1, 2 - x_3  - x_4\} + \pi_3 \cdot\textup{max}\{1, x_3 + x_4\} \geq \pi_1 + \pi_3 \geq \eta +1.$
On the other hand, since $\pi_4 \leq \eta$, we have $1 < x_1 + x_2 + x_3 + x_4$. Thus, in this case $x$ belongs to $1 < x_1+ x_2 + x_2 + x_3 + x_4 <2$.
\item If $x_1 + x_2 > 1$: Then note that $x_3 + x_4 < 1$, since otherwise $\sum_{j =1}^4\pi_jx_j \geq \pi_1  + \pi_3 \geq \eta +1$. Also note that if $x_3 + x_4 = 0$, then $\sum_{j =1}^4\pi_jx_j \leq \pi_1 + \pi_2 \leq \eta$. Thus, in this case we have that $x$ belongs $0 <  x_3 + x_4 < 1$.
\end{itemize}

\item $\pi_3 + \pi_4 \leq \eta$: We assume that $\pi_1+ \pi_2 + \pi_3 \geq \eta +1$. First, note that $S$ is contained in $x_1 + x_2 + x_3 + x_4 > 2$. Also, since $\pi_1+ \pi_2 + \pi_3 \geq \eta +1$, we have that $S$ is contained in $x_1 + x_2 + x_3 + x_4 < 3$. Thus, $S$ is contained in $2 < x_1 + x_2 + x_3 + x_4 < 3$.

\item $\pi_2 + \pi_4 \leq \eta$: We may assume $\pi_1+ \pi_2 + \pi_3 \geq \eta +1$ and $\pi_3 + \pi_4 \geq \eta +1$. We claim that $S$ is contained in the union of $2 < x_1 + x_2 + x_3 + x_4 < 3$ and $1< x_3 + x_4 < 2$. 
Consider the following cases for $x \in S$:
\begin{itemize}
\item If $x_3 + x_4 \leq 1$: We claim that that $x_1 + x_2 + x_3 + x_4 > 2$. Assume by contradiction $x_1 + x_2 + x_3 + x_4 \leq 2$. Thus, 
$\sum_{j =1}^4\pi_jx_j \leq \pi_2 \cdot \textup{max}\{1, 2 - x_3  - x_4\} + \pi_4 \cdot\textup{min}\{1, x_3 + x_4\} \leq \pi_2 + \pi_4 \leq \eta.$
On the other hand, since $\pi_1+ \pi_2 + \pi_3  \geq \eta +1$, we have $3 > x_1 + x_2 + x_3 + x_4$. Thus, in this case $x$ belongs to $2 < x_1+ x_2 + x_2 + x_3 + x_4 < 3$.
\item Also note that  $x_3 + x_4 = 2$ is not possible, since then $\sum_j \pi_j x_j \geq \pi_3 + \pi_4 \geq \eta +1$.
\end{itemize}
Thus, $S \cap [0, 1]^4$ is contained in the union of $2 < x_1 + x_2 + x_3 + x_4 < 3$ and $1< x_3 + x_4 < 2$. 

\item $\pi_2 + \pi_3 \geq \eta + 1$ and $\pi_1 + \pi_4 \geq \eta +1$: We may assume $\pi_4 \leq \eta$ and $\pi_1 + \pi_3 \leq \eta$. We claim that $S$ is contained in the union of $1 < x_2 + x_3 + x_4 < 2$, $0<x_1 <1$ and $0< x_4 < 1$. 
Consider the following cases:
\begin{itemize}
\item $x_1 = 0$: In this case, note that because of $\pi_2 + \pi_3 \geq \eta + 1$ and $\pi_4 \leq \eta$, we have that $x$ belongs to $1 < x_2 + x_3 + x_4 < 2$.
\item $0<x_1 < 1$: In this case, note that $x$ belongs to $0 < x_1  < 1$.
\item $x_1 = 1$: Clearly, due to $\pi_2 + \pi_3 \geq \eta + 1$ we have that $ x_2 + x_3 + x_4 < 2$. If $1 < x_2 + x_3 + x_4$, then $x$ belongs to $1 < x_2 + x_3 + x_4 < 2$.

Otherwise suppose, $x_2 + x_3 + x_4 \leq 1$. We claim that $0 < x_4 <1$. By contradiction, if $x_4 = 0$, then note that $\sum_j \pi_jx_j \leq \pi_1 + \pi_3 \leq \eta$. If $x_4 =1$, then note that $\sum_j \pi_jx_j \geq \pi_1 + \pi_4 \geq \eta + 1.$
\end{itemize}

\item $\pi_2 + \pi_3 \leq \eta $ and $\pi_1 + \pi_4 \leq \eta$: We may assume $\pi_1+ \pi_2 + \pi_3  \geq \eta + 1$ and $\pi_2 + \pi_4 \geq \eta + 1$. We claim that $S$ is contained in the union of $0 < x_2 + x_3 < 1$, $2< x_1 + x_2 + x_3 < 3$ and $0< x_4 < 1$. 
Consider the following cases:
\begin{itemize}
\item $x_4 = 0$:  In case, note that due to $\pi_1+ \pi_2 + \pi_3  \geq \eta + 1$, we have that $x_1 + x_2 + x_3 < 3$. Also, since $\pi_2 + \pi_3 \leq \eta$, we have that $x_1 + x_2 + x_3 > 2$. Thus, $x$ is contained in $2 < x_1 + x_2 + x_3 < 3$. 
\item $0< x_4 < 1$: In this case, note that $x$ belongs to $0 < x_4  < 1$.
\item $x_4 = 1$: In this case note that $x_2 + x_3 \geq 1$ is not possible, since $\sum_j\pi_jx_j \geq \pi_2 +  \pi_4 \geq \eta +1$. Also note that $x_2 + x_3 = 0$ is not possible, since that $\sum_j\pi_jx_j \leq \pi_1 + \pi_4 \leq \eta$. Thus, in this case, $x$ belongs to $0 < x_2 + x_3 <1$.
\end{itemize}

\item $\pi_2 + \pi_3 \geq \eta + 1$ and $\pi_1 + \pi_4 \leq \eta$: We may assume $\pi_1+ \pi_2 + \pi_3  \geq \eta + 1$, $\pi_1 + \pi_3 \leq \eta$ and $\pi_4 \leq \eta$. We claim that $S$ is contained in the union of $1 < x_2 + x_3 + x_4 < 2$, $0< x_1 < 1$, and $0<x_2  + x_3<1$. Consider the following cases:
\begin{itemize}
\item $x_1 = 0$:  In this case, note that because of $\pi_2 + \pi_3 \geq \eta + 1$ and $\pi_4 \leq \eta$, we have that $x$ belongs to $1 < x_2 + x_3 + x_4 < 2$.
\item $0< x_1 < 1$: In this case, note that $x$ belongs to $0 < x_1  < 1$.
\item $x_1 = 1$: Clearly, due to $\pi_2 + \pi_3 \geq \eta + 1$ we have that $ x_2 + x_3 + x_4 < 2$. If $1 < x_2 + x_3 + x_4$, then $x$ belongs to $1 < x_2 + x_3 + x_4 < 2$.

Otherwise suppose, $x_2 + x_3 + x_4 \leq 1$. In this case, note that $x_2 + x_3 = 1$ is not possible, since that $x_4 = 0$ and we have $\sum_j\pi_jx_j \leq \pi_1 +  \pi_3 \leq \eta$. Also note that $x_2 + x_3 = 0$ is not possible, since that $\sum_j\pi_jx_j \leq \pi_1 + \pi_4 \leq \eta$. Thus, in this case, $x$ belongs to $0 < x_2 + x_3 <1$.
\end{itemize}
\item $\pi_2 + \pi_3 \leq \eta $ and $\pi_1 + \pi_4 \geq \eta + 1$: We may assume $\pi_1+ \pi_2 + \pi_3  \geq \eta + 1$ and $\pi_4 \leq \eta$. We claim that $S$ is contained in the union of $1 < x_1 + x_2 + x_3  + x_4 < 2$, $2< x_1 + x_2 + x_3 < 3$ and $0< x_4 < 1$. 
Consider the following cases:
\begin{itemize}
\item $x_4 = 0$: In case, note that due to $\pi_1+ \pi_2 + \pi_3  \geq \eta + 1$, we have that $x_1 + x_2 + x_3 < 3$. Also, since $\pi_2 + \pi_3 \leq \eta$, we have that $x_1 + x_2 + x_3 > 2$. Thus, $x$ is contained in $2 < x_1 + x_2 + x_3 < 3$.
\item $0< x_4 < 1$: In this case, note that $x$ belongs to $0 < x_4  < 1$.
\item $x_4 = 1$: In this case note that since $\pi_1 + \pi_4 \geq \eta +1$, we have that $x_1 + x_2 + x_3 + x_4 < 2$. Also note that since $\pi_4 \leq \eta$, we have that $x_1 + x_2 + x_3 + x_4 > 1$. Thus, in this case, $x$ belongs to $1 < x_1 + x_2 + x_3 + x_4 < 2$.
\end{itemize}
\end{itemize}
\end{proof}

\section*{Acknowledgements}
We would like to thank Diego Cifuentes, Amitabh Basu, Antoine Deza, and Lionel Pournin for various discussions. We would also like to thank the support from AFOSR grant \# F9550-22-1-0052 and from the  ANID grant  Fondecyt \# 1210348 .
\bibliographystyle{plain}
\bibliography{ref}

\end{document}